\documentclass[12pt]{amsart}

\usepackage{enumerate}


\usepackage[foot]{amsaddr}
\usepackage[english]{babel}


\allowdisplaybreaks

\usepackage[
pdftex,hyperfootnotes]{hyperref}
\hypersetup{
colorlinks=true,
linkcolor=NavyBlue, 
urlcolor=RoyalPurple,
citecolor=OliveGreen,
pdftitle={Bidiagonal factorization of Hessenberg or Banded matrices},
bookmarks=true,
}
\usepackage[usenames,dvipsnames,svgnames,table,x11names]{xcolor}
\usepackage{pgfplots}
\usepackage{tikz}
\usepackage{tikz-3dplot}
\usetikzlibrary{automata,quotes, chains,matrix,calc,shadows,shapes.callouts,shapes.geometric,shapes.misc,positioning,patterns,decorations.shapes,
decorations.pathmorphing,decorations.markings,decorations.fractals,decorations.pathreplacing,shadings,fadings,arrows.meta,bending}

\usepackage{nicematrix,drawmatrix}
\usepackage[utf8]{inputenc}

\usepackage{comment}

\usepackage
[total={16cm,20cm},
top=4.25cm, 
left=2.5cm,
lmargin=3cm, 
rmargin=3cm]
{geometry}

\usepackage{amssymb,latexsym,amsmath,amsthm,bm}
\usepackage{mathrsfs}
\usepackage{mathtools,arydshln,mathdots}
\mathtoolsset{showonlyrefs}

\usepackage{tcolorbox}

\usepackage[table]{xcolor}

\usepackage{Baskervaldx}
\usepackage[]{newtxmath}

%




\usepackage{bigints}






\theoremstyle{plain}

\newtheorem{teo}{Theorem}[section]
\newtheorem{coro}[teo]{Corollary}

\newtheorem{pro}[teo]{Proposition}

\renewcommand{\d}{\operatorname{d}}
\newcommand{\Exp}[1]{\operatorname{e}^{#1}}

\allowdisplaybreaks[1]

\makeatletter
\DeclareRobustCommand{\gaussk}{\DOTSB\gaussk@\slimits@}
\newcommand{\gaussk@}{\mathop{\vphantom{\sum}\mathpalette\bigcal@{K}}}

\newcommand{\bigcal@}[2]{%
\vcenter{\m@th
\sbox\z@{\(#1\sum\)}%
\dimen@=\dimexpr\ht\z@+\dp\z@
\hbox{\resizebox{!}{0.8\dimen@}{\( \mathcal{K} \)}}%
}%
}
\newcommand{\cfracplus}{\mathbin{\cfracplus@}}
\newcommand{\cfracplus@}{%
\sbox\z@{\(\dfrac{1}{1}\)}%
\sbox\tw@{\(+\)}%
\raisebox{\dimexpr\dp\tw@-\dp\z@\relax}{\(+\)}%
}
\newcommand{\cfracdots}{\mathord{\cfracdots@}}
\newcommand{\cfracdots@}{%
\sbox\z@{\(\dfrac{1}{1}\)}%
\sbox\tw@{\(+\)}%
\raisebox{\dimexpr\dp\tw@-\dp\z@\relax}{\(\cdots\)}%
}
\makeatother

\makeatletter
\newcommand*{\relrelbarsep}{.386ex}
\newcommand*{\relrelbar}{%
\mathrel{%
\mathpalette\@relrelbar\relrelbarsep
}%
}
\newcommand*{\@relrelbar}[2]{%
\raise#2\hbox to 0pt{\(\m@th#1\relbar\)\hss}%
\lower#2\hbox{\(\m@th#1\relbar\)}%
}
\providecommand*{\rightrightarrowsfill@}{%
\arrowfill@\relrelbar\relrelbar\rightrightarrows
}
\providecommand*{\leftleftarrowsfill@}{%
\arrowfill@\leftleftarrows\relrelbar\relrelbar
}
\providecommand*{\xrightrightarrows}[2][]{%
\ext@arrow 0359\rightrightarrowsfill@{#1}{#2}%
}
\providecommand*{\xleftleftarrows}[2][]{%
\ext@arrow 3095\leftleftarrowsfill@{#1}{#2}%
}
\makeatother



\catcode`,\active

\catcode`\,12

\usepackage{appendix}




\usepackage{xcolor} 

\usepackage[normalem]{ulem}
\usepackage{soul} 

\usepackage{comment} 

\usepackage{tikz}
\usetikzlibrary{shapes,arrows}
\usepackage{verbatim}

\tikzstyle{block} = [draw, rectangle, 
minimum height=3em, minimum width=2em]

\usepackage{mathrsfs} 



\begin{document}
 \title[Bidiagonal factorization of banded matrices]
 {Bidiagonal factorization of recurrence banded matrices in mixed multiple orthogonality}
 
\author[A Branquinho]{Amílcar Branquinho\(^{1}\)}
\address{\(^1\)CMUC, Departamento de Matemática,
 Universidade de Coimbra, 3001-454 Coimbra, Portugal}
\email{\(^1\)ajplb@mat.uc.pt}

\author[JEF Díaz]{Juan EF Díaz\(^{2}\)}
\address{\(^{2,3,4}\)CIDMA, Departamento de Matemática, Universidade de Aveiro, 3810-193 Aveiro, Portugal}
\email{\(^2\)juan.enri@ua.pt}

\author[A Foulquié]{Ana Foulquié-Moreno\(^{3}\)}
\email{\(^3\)foulquie@ua.pt}

\author[H Lima]{Hélder Lima\(^{4}\)}
\email{\(^4\)helder.lima@ua.pt}

\author[M Mañas]{Manuel Mañas\(^{5}\)}
\address{\(^5\)Departamento de Física Teórica, Universidad Complutense de Madrid, Plaza Ciencias 1, 28040-Madrid, Spain 
}
\email{\(^5\)manuel.manas@ucm.es}

\keywords{Multiple orthogonal polynomials, step-line, recurrence matrix, bidiagonal factorizaton, Darboux transformations, Christoffel transformations}

\subjclass{42C05, 33C45, 33C47, 47B39, 47B36, 15A23}

\begin{abstract}
This paper demonstrates how to explicitly construct a bidiagonal factorization of the banded recurrence matrix that appears in mixed multiple orthogonality on the step-line in terms of the coefficients of the mixed multiple orthogonal polynomials. The construction is based on the \(LU\) factorization of the moment matrix and Christoffel transformations applied to the matrix of measures and the associated mixed multiple orthogonal polynomials. 
\end{abstract}

\maketitle



\section{Introduction}

Multiple orthogonal polynomials have historically been closely associated with Hermite–Padé theory and its applications in constructive function theory. For thorough introductions to multiple orthogonal polynomials, refer to the foundational text by Nikishin and Sorokin \cite{nikishin_sorokin} and Van Assche's chapter in \cite[Ch.~23]{Ismail}. An inspiring basic overview can also be found in \cite{andrei_walter}. Research in this field remains highly active: for asymptotics of zeros, see \cite{Aptekarev_Kaliaguine_Lopez}; for the \( LU \)-factorization perspective, \cite{afm}; for Christoffel perturbations, \cite{BFM_22,manas_rojas}; and for applications to random matrix theory, \cite{Bleher_Kuijlaars}.

Mixed multiple orthogonal polynomials \cite{afm,Evi_Arno,sorokin} have applications in stochastic processes, such as Brownian bridges or non-intersecting Brownian motions that start from \( p \) points and end at \( q \) points \cite{Evi_Arno}. They also feature in the study of multicomponent Toda lattices \cite{adler,afm}. Additionally, mixed multiple orthogonal polynomials play a significant role in number theory. Apéry, for instance, demonstrated the irrationality of \( \zeta(3) \) using a mixed Hermite–Padé approximation to three functions \cite{Apery}. These techniques have also been used to prove that infinitely many values of the Riemann zeta function at odd integers are irrational \cite{Ball_Rivoal}, and that at least one of the values \( \zeta(5), \zeta(7), \zeta(9), \zeta(11) \) is irrational \cite{Zudilin}.

Recently, in a series of works \cite{BDFMA,BFM_23}, we explored the application of type I and II multiple orthogonal polynomials to certain Markov chains, specifically those describing non-simple random walks (i.e., beyond the birth and death process). Our work culminated in a spectral Favard theorem with applications to these Markov chains, which are described by bounded banded \((p+2)\)-diagonal oscillatory Hessenberg operators that admit positive bidiagonal factorizations.

The primary result in \cite{BFM3} is that bounded banded Hessenberg matrices with positive bidiagonal factorizations are characterized by a set of positive Stieltjes–Lebesgue measures and can be spectrally described using multiple orthogonal polynomials. This generalizes the spectral Favard theorem for Jacobi matrices to the non-normal case (see\cite{Ismail}). In \cite{BFM2}, we discussed this positive bidiagonal factorization in the context of tetradiagonal Hessenberg matrices. A crucial aspect of the approach in \cite{BFM3} is the multiple Gauss quadrature formula we derived, which provides the exact degrees of precision necessary to describe the spectrality of these operators. We extended these results to mixed multiple orthogonal polynomials in \cite{BFM1}, where we presented a spectral Favard theorem for banded matrices with positive bidiagonal factorizations and included the normality characterization in \cite{BFM1_1}. For a review on these issues see \cite{Contemporary}.

Bidiagonal factorizations are fundamental tools in the study of totally positive matrices \cite{Fallat-Johnson,Pinkus book}, and they are pivotal for establishing a spectral Favard theorem for banded matrices \cite{BFM2,BFM1,BFM1_1}. These factorizations also play a role in Hermite–Padé analysis \cite{Aptekarev_Kaliaguine_VanIseghem}, integrable systems, and Darboux transformations \cite{dolores_ana_ab1}, as well as in the construction of Christoffel transformations \cite{BFM_22}. From the perspective of Markov chains, bidiagonal factorization corresponds to the stochastic factorization of the Markov transition matrix \cite{BDFM_finite}. Bidiagonal factorizations for recurrence matrices of specific families of multiple orthogonal polynomials have been presented in \cite{Aptekarev_Kaliaguine_VanIseghem,BFM_24,BFM_23,Lima-Loureiro}, with further connections to continued branched fractions and combinatorics explored in \cite{Lima,Sokal}.

In this brief paper, we present a comprehensive algorithmic approach to constructing bidiagonal factorizations using the coefficients of the Christoffel transformations of mixed multiple orthogonal polynomials. This method is particularly useful when these transformations map within the same family of polynomials and explicit expressions for the polynomials are available. Our technique is based on the \( LU \)-factorization framework of multiple orthogonality, as discussed in \cite{afm,manas}. It extends to the mixed case the methodology introduced in \cite{BDFHM} though we have omitted, in this work, the discussion on branched continued fractions.


\subsection{The moment matrix}
Let's delve into the scenario of a rectangular matrix of measures:
\begin{align*}
	\d	\mu=\begin{bNiceMatrix}
 \d\mu_{1,1}&\Cdots &\d\mu_{1,p}\\
 \Vdots & & \Vdots\\
 \d	\mu_{q,1}&\Cdots &\d\mu_{q,p}
	\end{bNiceMatrix},
\end{align*}
where the measures \(\mu_{i,j}\) are confined to be supported on the interval \(\Delta \subseteq \mathbb{R}\).
For \(r \in \mathbb{N}\), we consider the matrix of monomials:
\begin{align*}
X_{[r]}(x) = 
\begin{bNiceMatrix}
	I_r \\
	xI_r \\
	x^2 I_r \\
	\Vdots
\end{bNiceMatrix} ,
 \end{align*}
where \( I_r \) is the usual identity matrix of order \( r \).
The moment matrix is defined as:
\begin{align*}
\mathscr{M}(\mu) \coloneqq \int_{\Delta} X_{[q]}(x) \, \mathrm{d}\mu(x) \, X_{[p]}^\top(x).
 \end{align*}
When it's clear from context, for clarity, we will drop the explicit dependence on the measure \(\mu\) and simply write \(\mathscr{M}\).

\subsection{Gauss--Borel factorization}

If all the leading principal submatrices \(\mathscr{M}^{[k]}\) are nonsingular, then the Gauss--Borel factorization exists:
\begin{align*}
\mathscr{M} = \mathscr{L}^{-1} \mathscr{U}^{-1},
 \end{align*}
where \(\mathscr{L}\) is a nonsingular lower triangular semi-infinite matrix and \(\mathscr{U}\) is a nonsingular upper triangular matrix.
 
In this paper, we assume that all leading principal submatrices \(\mathscr{M}^{[k]}\) are nonsingular, ensuring that the moment matrix always admits a Gauss–Borel factorization.

 When necessary, we will denote these triangular matrices as \(\mathscr{L}(\mu)\) and \(\mathscr{U}(\mu)\), indicating the measure \(\mu\) from which they are constructed. It's important to note that this factorization is not unique due to the freedom:
\begin{align*}
\mathscr{L} \to \mathscr{d}^{-1}\mathscr{L}, \quad \mathscr{U} \to \mathscr{U}\mathscr{d},
 \end{align*}
where \(\mathscr{d}\) is any nonsingular diagonal matrix.

Each choice of the invertible diagonal matrix \(\mathscr{d}\) results in a distinct factorization. Two important normalizations are:
\begin{enumerate}[\rm i)]
	\item The left normalization involves setting \(\mathscr{L}\) as a lower unitriangular matrix. When applicable, we will represent the corresponding triangular matrices as \(\mathscr{L}_L\) and \(\mathscr{U}_L\).
	\item The right normalization involves setting \(\mathscr{U}\) as an upper unitriangular matrix. When applicable, we will denote the corresponding triangular matrices as \(\mathscr{L}_R\) and \(\mathscr{U}_R\).
\end{enumerate}

If all leading principal submatrices \(\mathscr{M}^{[k]}\) are nonsingular, the Gauss–Borel factorization is uniquely given by:
\begin{align*}
\mathscr{M} = \mathscr{L}_L^{-1} \mathscr{D} \mathscr{U}_R^{-1}, 
 \end{align*}
in terms of unitriangular matrices and a nonsingular diagonal matrix. 

\subsection{Mixed multiple orthogonal polynomials on the step-line}

Associated with the Gauss--Borel factorization, let's consider the following matrices of polynomials:
\begin{align*}
\begin{aligned}
	B(x) &= \mathscr{L} X_{[q]}(x), & A(x) &= X_{[p]}^\top(x) \mathscr{U}.
\end{aligned}
 \end{align*}
We represent these matrices in terms of their polynomial entries as follows:
\begin{align*}
\begin{aligned}
	B &= \begin{bNiceMatrix}
 B^{(1)}_0 & \Cdots & B^{(q)}_0 \\
 B^{(1)}_1 & \Cdots & B^{(q)}_1 \\
 B^{(1)}_2 & \Cdots & B^{(q)}_2 \\
 \Vdots[shorten-end=-0pt] & & \Vdots[shorten-end=-0pt]
	\end{bNiceMatrix}, &
	A &= \left[\begin{NiceMatrix}
 A^{(1)}_0 & A^{(1)}_1 & A^{(1)}_2 & \Cdots \\
 \Vdots & \Vdots & \Vdots & \\
 A^{(p)}_0 & A^{(p)}_1 & A^{(p)}_2 & \Cdots
	\end{NiceMatrix}\right].
\end{aligned}
 \end{align*}
We have the following relations:
\begin{align*}
\int_{\Delta} B(x) \, \mathrm{d}\mu(x) \, A(x) = I,
 \end{align*}
whose entries are given by the biorthogonality relations:
\begin{align*}
\int_{\Delta} \sum_{b=1}^q \sum_{a=1}^p B^{(b)}_n(x) \, \mathrm{d}\mu_{b,a}(x) \, A^{(a)}_m(x) = \delta_{n,m}.
 \end{align*}
From the Gauss--Borel factorization, it also follows that:
\begin{align*}
	\int_\Delta B(x) \, \mathrm{d}\mu(x) X_{[p]}^\top(x) &= \mathscr{U}^{-1}, \\
	\int_\Delta X_{[p]}(x) \, \mathrm{d}\mu(x) A(x) &= \mathscr{L}^{-1},
\end{align*}
which entrywise represent the following mixed multiple orthogonality relations on the step-line:
\begin{align*}
	\int_\Delta x^l \sum_{a=1}^p \mathrm{d}\mu_{b,a}(x) A_n^{(a)}(x) &= 0, 
	& \begin{aligned}
 & b \in \{1, \ldots, q\}, \quad l \in \left\{0, \ldots, \left\lceil\frac{n-b+1}{q}\right\rceil-1\right\},
	\end{aligned} \\
	\int_\Delta \sum_{b=1}^q B_n^{(b)}(x) \mathrm{d}\mu_{b,a}(x) x^l &= 0, 
	& \begin{aligned}
 & a \in \{1, \ldots, p\}, \quad l \in \left\{0, \ldots, \left\lceil\frac{n-a+1}{p}\right\rceil-1\right\}.
	\end{aligned}
\end{align*}

\subsection{Banded recurrence matrix}

For \(r \in \mathbb{N}_0\), the shift block matrix is:
\begin{align*}
\Lambda_{[r]} \coloneq 
\left[
\begin{NiceMatrix}
	0_r & I_r & 0_r & \Cdots \\
	0_r & 0_r & I_r & \Ddots \\
	0_r & 0_r & 0_r & \Ddots \\
	\Vdots & \Ddots[shorten-end=3pt] & \Ddots[shorten-end=7pt] & \Ddots[shorten-end=9pt]
\end{NiceMatrix}
\right],
 \end{align*}
and for \(r = 1\) we denote \(\Lambda_{[1]}\) as \(\Lambda\). Note that \(\Lambda_{[r]} = \Lambda^r\). These shift matrices have the important property:
\begin{align*}
\Lambda_{[r]}X_{[r]}(x) = x X_{[r]}(x).
 \end{align*}

The moment matrix \(\mathscr{M}\) possesses a Hankel-type symmetry relation that can be expressed as:
\begin{align*}
\Lambda_{[q]}\mathscr{M} = \mathscr{M}\Lambda_{[p]}^\top.
 \end{align*}
From this relation and the Gauss--Borel factorization, we derive:
\begin{align}\label{eq:banded_recurrence_matrix}
	T = \mathscr{L} \Lambda_{[q]} \mathscr{L}^{-1} = \mathscr{U}^{-1} \Lambda_{[p]}^\top \mathscr{U} ,
\end{align}
where
the matrix \(T\) is a \((p,q)\)-banded matrix, with \(p\) subdiagonals and \(q\) superdiagonals. Moreover, the following relations are satisfied
\begin{align*}
T B(x) = x B(x), \quad A(x) T = x A(x),
 \end{align*}
meaning \(B\) and \(A\) are right and left eigenvectors of  \(T\), respectively. These equations represent recurrence relations among the mixed multiple orthogonal polynomials. Therefore, \(T\) is known as the recurrence matrix.

\subsection{Elementary Christoffel perturbations}
Let's introduce the following \(r \times r\) polynomial matrix:
\begin{align*}
\mathfrak{X}_{[r]}(x) = 
\begin{bNiceMatrix}
	0 & 1 & 0 & \Cdots & 0 & 0 \\
	0 & 0 & 1 & \Ddots & & 0 \\
	\Vdots & \Vdots & \Ddots[shorten-end=10pt] & \Ddots & & \Vdots \\
	0 & 0 & & & 1 & 0 \\
	0 & 0 & \Cdots & & 0 & 1 \\
	x & 0 & \Cdots & & 0 & 0
\end{bNiceMatrix}.
 \end{align*}
Notice that \(\mathfrak{X}_{[r]}^r = xI_r\) and that:
\begin{align*}
X_{[r]}\mathfrak{X}_{[r]} = \Lambda X_{[r]}.
 \end{align*}

For \(k \in \mathbb{N}_0\coloneq \{0,1,2,\dots\}\), we consider new matrices of measures given by:
\begin{align}\label{eq:Christoffel}
	\begin{aligned}
	\mathrm{d}\mu_L^{(k)} &\coloneq \mathrm{d}\mu \left(\mathfrak{X}_{[p]}^k\right)^\top, \\
	\mathrm{d}\mu_R^{(k)} &\coloneq \mathfrak{X}_{[q]}^k \mathrm{d}\mu.
\end{aligned}
\end{align}
For \(k=0\), we have \(\mathrm{d}\mu_R^{(0)} = \mathrm{d}\mu_L^{(0)} = \mathrm{d}\mu\).
Note that these new matrices of measures are known as Christoffel perturbations of the matrix of measures \(\mathrm{d}\mu\). For the corresponding perturbed moment matrices, we have:
\begin{align*}
\begin{aligned}
	\mathscr{M}_L^{(k)} &\coloneq \mathscr{M}\left(\mu_L^{(k)}\right) = 
	\mathscr{M}(\mu) \left(\Lambda^k\right)^\top, \\
	\mathscr{M}_R^{(k)} &\coloneq \mathscr{M}\left(\mu_R^{(k)}\right) =
	\Lambda^k \mathscr{M}(\mu).
\end{aligned}
 \end{align*}
For each \(k \in \mathbb{N}_0\), we assume that all leading principal submatrices \(\left(\mathscr{M}^{(k)}\right)^{[l]}\) are nonsingular, guaranteeing the existence of a Gauss–Borel factorization for the sequence of Christoffel transformations.
 
We will denote:
\begin{align*}
\begin{aligned}
	\mathscr{L}_L^{(k)} &\coloneq \mathscr{L}\left(\mu_L^{(k)}\right), & \mathscr{L}_R^{(k)} &\coloneq \mathscr{L}\left(\mu_R^{(k)}\right), \\
	\mathscr{U}_L^{(k)} &\coloneq \mathscr{U}\left(\mu_L^{(k)}\right), & \mathscr{U}_R^{(k)} &\coloneq \mathscr{U}\left(\mu_R^{(k)}\right).
\end{aligned}
 \end{align*}

\section{Bidiagonal factorizations of the banded recurrence matrix}
Let's discuss the \( k\)-th left or right Christoffel perturbations. For each case, we adopt the left or right normalizations. In the left normalization \(\mathscr{L}^{(k)}_L\) are lower unitriangular matrices,
while in right normalization \(\mathscr{U}^{(k)}_R\) are upper unitriangular matrices.

\begin{pro}
\begin{enumerate}[\rm i)]
 \item For the \( k \)-th left Christoffel transformations, there exist lower bidiagonal matrices with unit diagonal \( L_1,\ldots, L_k\) such that
\begin{align}\label{eq:left_Christoffel} 
 \begin{aligned}
 \mathscr{U}_L^{-1}\left(\Lambda^k\right)^\top &= L_1 \cdots L_k \left(\mathscr{U}^{(k)}_L\right)^{-1}, \\
 \mathscr{L}^{(k)}_L &= L_k^{-1} \cdots L_1^{-1} \mathscr{L}_L.
 \end{aligned}
\end{align}
\item 
 For the \( k \)-th right Christoffel transformations, there exist upper bidiagonal matrices with unit diagonal \( U_1, \ldots, U_k\) such that
\begin{align}\label{eq:right_Christoffel}
	\begin{aligned}
 \Lambda^k \mathscr{L}_R^{-1} &= \left(\mathscr{L}_R^{(k)}\right)^{-1} U_k \cdots U_1, \\
 \mathscr{U}_R^{(k)} &= \mathscr{U}_R U_1^{-1} \cdots U_k^{-1}.
	\end{aligned}
\end{align}
	\end{enumerate}
\end{pro}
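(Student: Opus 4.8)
The plan is to prove part i) by induction on $k$, then observe that part ii) follows by an entirely symmetric argument (transposing the roles of $\mathscr L$ and $\mathscr U$, left and right, sub- and super-diagonals). So the real content is a single elementary Christoffel step: understanding the factor that relates the Gauss--Borel data of $\mu$ to that of the once-perturbed measure $\mu_L^{(1)}$, whose moment matrix is $\mathscr M_L^{(1)}=\mathscr M\,\Lambda^\top$.

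First I would establish the base case $k=1$. Starting from $\mathscr M_L^{(1)}=\mathscr M\Lambda^\top$ and inserting the left-normalized Gauss--Borel factorizations $\mathscr M=\mathscr L_L^{-1}\mathscr U_L^{-1}$ and $\mathscr M_L^{(1)}=\big(\mathscr L_L^{(1)}\big)^{-1}\big(\mathscr U_L^{(1)}\big)^{-1}$, one gets
\begin{align*}
\big(\mathscr L_L^{(1)}\big)^{-1}\big(\mathscr U_L^{(1)}\big)^{-1}=\mathscr L_L^{-1}\mathscr U_L^{-1}\Lambda^\top,
\end{align*}
hence $\mathscr L_L^{(1)}\mathscr L_L^{-1}=\big(\mathscr U_L^{(1)}\big)^{-1}\mathscr U_L^{-1}\Lambda^\top$. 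Call this common matrix $L_1$. The left-hand side is a product of lower triangular matrices, so $L_1$ is lower triangular; the right-hand side shows $\mathscr U_L^{-1}\Lambda^\top=\mathscr U_L^{(1)}L_1\big(\mathscr U_L^{(1)}\big)^{-1}\cdot\mathscr U_L^{-1}$... more precisely, from the right-hand expression $L_1=\big(\mathscr U_L^{(1)}\big)^{-1}\mathscr U_L^{-1}\Lambda^\top$, and since $\mathscr U_L^{-1}\Lambda^\top$ has entries only on the main diagonal and the first superdiagonal shifted — the key structural input is that $\Lambda^\top$ lowers the band by one — a degree/support count on the rows shows $L_1$ has nonzero entries only on the main diagonal and the first subdiagonal. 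The unit-diagonal normalization of $\mathscr L_L$ and $\mathscr L_L^{(1)}$ forces the diagonal of $L_1=\mathscr L_L^{(1)}\mathscr L_L^{-1}$ to be the identity. This gives both identities in \eqref{eq:left_Christoffel} for $k=1$: $\mathscr U_L^{-1}(\Lambda)^\top=L_1\big(\mathscr U_L^{(1)}\big)^{-1}$ and $\mathscr L_L^{(1)}=L_1^{-1}\mathscr L_L$.

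For the inductive step, I would apply the $k=1$ result not to $\mu$ but to the already-perturbed measure $\mu_L^{(k-1)}$, using $\mathscr M\big(\mu_L^{(k)}\big)=\mathscr M\big(\mu_L^{(k-1)}\big)\Lambda^\top$ together with the identity $\big(\mathfrak X_{[p]}^{k}\big)^\top=\big(\mathfrak X_{[p]}^{k-1}\big)^\top\mathfrak X_{[p]}^\top$ at the level of measures; this produces a lower bidiagonal unit-diagonal matrix $L_k$ with $\mathscr L_L^{(k)}=L_k^{-1}\mathscr L_L^{(k-1)}$ and $\big(\mathscr U_L^{(k-1)}\big)^{-1}\Lambda^\top=L_k\big(\mathscr U_L^{(k)}\big)^{-1}$. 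Composing with the induction hypothesis $\mathscr L_L^{(k-1)}=L_{k-1}^{-1}\cdots L_1^{-1}\mathscr L_L$ gives the second line of \eqref{eq:left_Christoffel}; for the first line, iterate $\mathscr U_L^{-1}(\Lambda^k)^\top=\big(\mathscr U_L^{-1}(\Lambda^{k-1})^\top\big)\Lambda^\top$, substitute the induction hypothesis $\mathscr U_L^{-1}(\Lambda^{k-1})^\top=L_1\cdots L_{k-1}\big(\mathscr U_L^{(k-1)}\big)^{-1}$, and then replace $\big(\mathscr U_L^{(k-1)}\big)^{-1}\Lambda^\top$ by $L_k\big(\mathscr U_L^{(k)}\big)^{-1}$.

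The main obstacle I anticipate is the bidiagonality claim for $L_1$ (equivalently each $L_k$): proving it has \emph{exactly} two nonzero diagonals rather than merely being lower triangular. Lower triangularity is automatic from the factorization, but the band restriction must come from a careful bookkeeping of how $\Lambda^\top$ interacts with the triangular structure — concretely, reading off from $L_1=\big(\mathscr U_L^{(1)}\big)^{-1}\mathscr U_L^{-1}\Lambda^\top$ that multiplication by $\Lambda^\top$ on the right shifts columns and thereby confines the lower-triangular entries of the product to the first subdiagonal. I would phrase this cleanly using the polynomial matrices $B(x)$, $A(x)$ and the relation $X_{[r]}\mathfrak X_{[r]}=\Lambda X_{[r]}$: the perturbed polynomials $A^{(1)}(x)$ are, up to the bidiagonal factor, the original $A(x)$ with $X_{[p]}$ replaced by $X_{[p]}\mathfrak X_{[p]}=\Lambda X_{[p]}$, and matching degrees on both sides pins down which entries of $L_1$ can survive. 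Once this structural lemma is in hand, the induction is routine.
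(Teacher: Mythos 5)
Your proposal follows essentially the same route as the paper: the one-step identity comes from equating the Gauss--Borel factorizations of \(\mathscr M_L^{(1)}=\mathscr M\Lambda^\top\), the bidiagonality of \(L_1\) comes from the fact that \(\mathscr U_L^{-1}\Lambda^\top\) is upper Hessenberg (so the unit lower triangular factor in its \(LU\) factorization is lower bidiagonal), the general case is the same step applied to \(\mathscr M_L^{(k+1)}=\mathscr M_L^{(k)}\Lambda^\top\), and part ii) is indeed symmetric. One correction: your intermediate ``common matrix'' identity is wrong as displayed --- since \(\Lambda^\top\) has no right inverse you cannot isolate \(\mathscr L_L^{(1)}\mathscr L_L^{-1}\) on one side, and the equation \(\mathscr L_L^{(1)}\mathscr L_L^{-1}=(\mathscr U_L^{(1)})^{-1}\mathscr U_L^{-1}\Lambda^\top\) is false; the correct common value is \(L_1=\mathscr L_L\left(\mathscr L_L^{(1)}\right)^{-1}=\mathscr U_L^{-1}\Lambda^\top\,\mathscr U_L^{(1)}\), which is simultaneously lower unitriangular and upper Hessenberg, hence lower bidiagonal with unit diagonal, and from it your two stated conclusions for \(k=1\) follow exactly as you claim.
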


\begin{proof}
	\begin{enumerate}[\rm i)]
 \item From the Gauss--Borel factorizations of \(\mathscr{M}_L^{(1)}\) and \(\mathscr{M}\), we obtain:
\begin{align*}
\left(\mathscr{L}^{(1)}_L\right)^{-1}\left( \mathscr{U}^{(1)}_L\right)^{-1} = \mathscr{L}_L^{-1} \mathscr{U}_L^{-1}\Lambda^\top.
 \end{align*}
The matrix \(\mathscr{U}^{-1}\Lambda^\top\) is an upper Hessenberg matrix whose Gauss--Borel factorization is:
\begin{align*}
\mathscr{U}_L^{-1}\Lambda^\top = L_1 \left(\mathscr{U}^{(1)}_L\right)^{-1},
 \end{align*}
where \(L_1\) is a lower bidiagonal matrix with unit diagonal. Consequently:
\begin{align*}
\mathscr{L}_L^{(1)} = L_1^{-1} \mathscr{L}_L.
 \end{align*}
In general, since
\begin{align*}
\mathscr{M}_L^{(k+1)} = \mathscr{M}_L^{(k)}\Lambda^\top,
 \end{align*}
we obtain:
\begin{align*}
\begin{aligned}
	\left( \mathscr{U}_L^{(k)} \right)^{-1}\Lambda^\top &= L_{k+1} \left(\mathscr{U}^{(k+1)}_L\right)^{-1}, \\
	\mathscr{L}^{(k+1)}_L &= L_{k+1}^{-1} \mathscr{L}_L^{(k)},
\end{aligned}
 \end{align*}
where \(L_k\) is a lower bidiagonal matrix with unit diagonal. Therefore, \eqref{eq:left_Christoffel} is proven.

\item Let's now discuss the right Christoffel perturbations. In this case, we opt for a right normalization, meaning all the matrices \(\mathscr{U}_R^{(k)}\) are assumed to be upper unitriangular matrices.
For the Gauss--Borel factorization of \(\mathscr{M}_R^{(1)}\), we have:
\begin{align*}
\mathscr{M}_R^{(1)} = \Lambda \mathscr{L}_R^{-1} \mathscr{U}_R^{-1}.
 \end{align*}
The matrix \(\Lambda \mathscr{L}^{-1}\) is a lower Hessenberg matrix whose Gauss--Borel factorization~is:
\begin{align*}
\Lambda \mathscr{L}_R^{-1} = \left(\mathscr{L}_R^{(1)}\right)^{-1} U_1,
 \end{align*}
where \(U_1\) is an upper bidiagonal matrix with unit diagonal. Consequently:
\begin{align*}
\mathscr{U}_R^{(1)} = \mathscr{U}_R U_1^{-1}.
 \end{align*}
In general, since
\begin{align*}
\mathscr{M}_R^{(k+1)} = \Lambda \mathscr{M}_R^{(k)},
 \end{align*}
we obtain:
\begin{align*}
\begin{aligned}
	\Lambda \left( \mathscr{L}_R^{(k)} \right)^{-1} &= \left(\mathscr{L}_R^{(k+1)}\right)^{-1} U_{k+1}, \\
	\mathscr{U}_R^{(k+1)} &= \mathscr{U}_R^{(k)} U_{k+1}^{-1}.
\end{aligned}
 \end{align*}
Consequently \eqref{eq:right_Christoffel} follows.
	\end{enumerate}
\end{proof}

Let's now consider the banded recurrence matrices \(T_L\) and \(T_R\) associated with the left and right normalizations of the Gauss--Borel factorization. We will use the notation:
\begin{align*}
\begin{aligned}
\widetilde{L}_k &\coloneq \mathscr{D}^{-1} L_k \mathscr{D},&	\widetilde{U}_k &\coloneq \mathscr{D} U_k \mathscr{D}^{-1}.
\end{aligned}
 \end{align*}
Note that these new matrices are bidiagonal and with unit diagonal.
\begin{teo}
The following bidiagonal factorizations of the left and right normalizations of the banded recurrence matrix hold:
\begin{align}\label{eq:T_L_bidiagonal}
	T_L& = L_1 \cdots L_p \mathscr{D}^{(p)}\mathscr{D}^{-1} \widetilde{U}_q \cdots \widetilde{U}_1,\\\label{eq:T_R_bidiagonal}
	T_R &= \widetilde{L}_1 \cdots \widetilde{L}_p \mathscr{D}^{-1} \mathscr{D}^{(q)} U_q\cdots U_1.
\end{align}
\end{teo}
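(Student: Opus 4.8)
The plan is to combine the two identities in \eqref{eq:banded_recurrence_matrix} with the bidiagonal decompositions produced by the Proposition, taking $k=p$ in the left case and $k=q$ in the right case. Recall that $\Lambda_{[q]}=\Lambda^q$ and $\Lambda_{[p]}=\Lambda^p$, and that $\mathfrak{X}_{[r]}^r = xI_r$, so that after $p$ (resp.\ $q$) elementary Christoffel perturbations the matrix of measures returns, up to the diagonal rescaling that records the change of normalization, to a rescaled copy of the original $\d\mu$. Concretely, $\mathscr{M}^{(p)}_L = \mathscr{M}\,(\Lambda^p)^\top = \mathscr{M}\,\Lambda_{[p]}^\top = \Lambda_{[q]}\mathscr{M}$ by the Hankel-type symmetry, and similarly $\mathscr{M}^{(q)}_R = \Lambda^q\mathscr{M} = \Lambda_{[q]}\mathscr{M}$. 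Hence the $p$-th left Christoffel transform and the $q$-th right Christoffel transform both have moment matrix $\Lambda_{[q]}\mathscr{M}$, and their Gauss--Borel factors differ from those of $\mathscr{M}$ only through the left/right normalization, i.e.\ through multiplication by $\mathscr{D}$, $\mathscr{D}^{(p)}$ or $\mathscr{D}^{(q)}$ as appropriate.

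First I would treat $T_L$. Using the left-normalized factorization $\mathscr{M}=\mathscr{L}_L^{-1}\mathscr{D}\,\mathscr{U}_R^{-1}$, one has $\mathscr{U}_L^{-1}=\mathscr{D}\,\mathscr{U}_R^{-1}$ and $T_L = \mathscr{L}_L\Lambda_{[q]}\mathscr{L}_L^{-1}$. From \eqref{eq:left_Christoffel} with $k=p$, $\mathscr{U}_L^{-1}(\Lambda^p)^\top = L_1\cdots L_p\,(\mathscr{U}^{(p)}_L)^{-1}$ and $\mathscr{L}^{(p)}_L = L_p^{-1}\cdots L_1^{-1}\mathscr{L}_L$. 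Now I would identify the Christoffel-transformed factors: since $\mathscr{M}^{(p)}_L = \Lambda_{[q]}\mathscr{M}$, and the left normalization makes $\mathscr{L}^{(p)}_L$ unitriangular, comparison of the two Gauss--Borel factorizations of $\Lambda_{[q]}\mathscr{M}$ gives $\mathscr{U}^{(p)}_L{}^{-1} = \mathscr{D}^{(p)}\mathscr{U}_R^{-1}$ after suitably tracking the diagonal matrix $\mathscr{D}^{(p)}$ associated with $\mu_L^{(p)}$. Substituting the second equation of \eqref{eq:left_Christoffel} into $T_L=\mathscr{L}_L\Lambda_{[q]}\mathscr{L}_L^{-1}$ and using that $\mathscr{L}^{(p)}_L\Lambda_{[q]}(\mathscr{L}^{(p)}_L)^{-1}$ is, via the second identity in \eqref{eq:banded_recurrence_matrix} applied to $\mu_L^{(p)}$, equal to $\mathscr{U}^{(p)}_L{}^{-1}\Lambda_{[p]}^\top\mathscr{U}^{(p)}_L$, one threads the $L_j$'s through on the left and the $\widetilde U_j$'s through on the right; the conjugation by $\mathscr{D}$ is exactly what turns the $U_j$ coming from the right-normalized side into $\widetilde U_j=\mathscr{D}U_j\mathscr{D}^{-1}$, and the leftover diagonal factor is $\mathscr{D}^{(p)}\mathscr{D}^{-1}$. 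This yields \eqref{eq:T_L_bidiagonal}.

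The argument for $T_R$ is the mirror image: start from $T_R = \mathscr{U}_R^{-1}\Lambda_{[p]}^\top\mathscr{U}_R = \mathscr{L}_R\Lambda_{[q]}\mathscr{L}_R^{-1}$, use \eqref{eq:right_Christoffel} with $k=q$, namely $\Lambda^q\mathscr{L}_R^{-1} = (\mathscr{L}^{(q)}_R)^{-1}U_q\cdots U_1$ and $\mathscr{U}^{(q)}_R = \mathscr{U}_R U_1^{-1}\cdots U_q^{-1}$, identify $\mathscr{L}^{(q)}_R$ and $\mathscr{U}^{(q)}_R$ against the Gauss--Borel factorization of $\mathscr{M}^{(q)}_R = \Lambda_{[q]}\mathscr{M}$ (now with $\mathscr{U}^{(q)}_R$ unitriangular, forcing the diagonal to sit on the left, giving $\mathscr{L}^{(q)}_R{}^{-1} = \mathscr{L}_L^{-1}(\mathscr{D}^{(q)})^{-1}$-type relations), and push the $\widetilde L_j$'s out to the left and the $U_j$'s out to the right; conjugation by $\mathscr{D}$ converts the $L_j$ from the left-normalized side into $\widetilde L_j$, and the residual diagonal is $\mathscr{D}^{-1}\mathscr{D}^{(q)}$. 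This gives \eqref{eq:T_R_bidiagonal}.

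The main obstacle will be bookkeeping the diagonal matrices correctly: one must be careful that $\mathscr{D}$, $\mathscr{D}^{(p)}$, $\mathscr{D}^{(q)}$ refer respectively to the Gauss--Borel diagonals of $\mu$, $\mu_L^{(p)}$, $\mu_R^{(q)}$, that the bidiagonal factors $L_j$, $U_j$ in the Proposition are built with the left and right normalizations respectively (so they do not a priori live in the same normalization as the final $T_L$, $T_R$), and that conjugating a lower (resp.\ upper) bidiagonal unit-diagonal matrix by a diagonal matrix preserves its shape — which is exactly why $\widetilde L_k$ and $\widetilde U_k$ are again bidiagonal with unit diagonal, as remarked before the statement. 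Once the normalization dictionary between $\mathscr{L}_L,\mathscr{U}_L$ and $\mathscr{L}_R,\mathscr{U}_R$ (namely $\mathscr{L}_L = \mathscr{D}\mathscr{L}_R$, $\mathscr{U}_R = \mathscr{U}_L\mathscr{D}$, up to the stated conventions) is pinned down, the two displayed factorizations drop out by direct substitution.
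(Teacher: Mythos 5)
Your proposal is correct and follows essentially the same route as the paper: apply the Proposition at \(k=p\) (left) and \(k=q\) (right), use the Hankel symmetry to identify \(\mathscr{M}_L^{(p)}=\Lambda_{[q]}\mathscr{M}=\mathscr{M}_R^{(q)}\) so that the left- and right-normalized factors of this common moment matrix differ only by the diagonal \(\mathscr{D}^{(p)}\), and then conjugate by \(\mathscr{D}\) to produce the \(\widetilde U_j\) (resp.\ \(\widetilde L_j\)) and the residual factor \(\mathscr{D}^{(p)}\mathscr{D}^{-1}\) (resp.\ \(\mathscr{D}^{-1}\mathscr{D}^{(q)}\)); indeed you make the key identification \(\mathscr{M}_L^{(p)}=\mathscr{M}_R^{(q)}\) more explicit than the paper does. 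The only slip is notational: where you write \(\bigl(\mathscr{U}_L^{(p)}\bigr)^{-1}=\mathscr{D}^{(p)}\mathscr{U}_R^{-1}\) you mean \(\bigl(\mathscr{U}_L^{(p)}\bigr)^{-1}=\mathscr{D}^{(p)}\bigl(\mathscr{U}_R^{(q)}\bigr)^{-1}\), i.e.\ the right-normalized factor of the \emph{perturbed} moment matrix, which is what yields \(\bigl(\mathscr{U}_L^{(p)}\bigr)^{-1}\mathscr{U}_L=\mathscr{D}^{(p)}U_q\cdots U_1\mathscr{D}^{-1}\).
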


\begin{proof}
 From \eqref{eq:banded_recurrence_matrix}, we know that:
\begin{align*}
\begin{aligned}
	T_L &= \mathscr{U}_L^{-1} \left(\Lambda^p\right)^\top \mathscr{U}_L = L_1 \cdots L_p \left(\mathscr{U}^{(p)}_L\right)^{-1}\mathscr{U}_L, \\
	T_R &= \mathscr{L}_R \Lambda^q \mathscr{L}_R^{-1} = \mathscr{L}_R \left(\mathscr{L}_R^{(q)}\right)^{-1}U_q \cdots U_1.
\end{aligned}
 \end{align*}
Recalling that \(\mathscr{U}_L^{(k)} = \mathscr{U}_R^{(k)}\left(\mathscr{D}^{(k)}\right)^{-1}\), we obtain:
\begin{align*}
\begin{aligned}
	\left(\mathscr{U}^{(p)}_L\right)^{-1}\mathscr{U}_L &= \mathscr{D}^{(p)}\left(\mathscr{U}^{(p)}_R\right)^{-1}\mathscr{U}_R \mathscr{D}^{-1} = \mathscr{D}^{(p)}U_q \cdots U_1 \mathscr{D}^{-1}.
\end{aligned}
 \end{align*}
Hence,we find \eqref{eq:T_L_bidiagonal}.

In a similar fashion,
\begin{align*}
\begin{aligned}
	\mathscr{L}_R \left(\mathscr{L}_R^{(q)}\right)^{-1} &= \mathscr{D}^{-1} \mathscr{L}_L\left(\mathscr{L}_L^{(q)}\right)^{-1} \mathscr{D}^{(q)} \\
	&= \mathscr{D}^{-1} L_1\cdots L_p \mathscr{D}^{(q)}.
\end{aligned}
 \end{align*}
Thus, we have proven \eqref{eq:T_R_bidiagonal}.
\end{proof}
From the proof of the previous result we get:
\begin{coro}
	The bidiagonal matrices satisfy
\begin{align*}
\begin{aligned}
	L_{k} &= \left( \mathscr{U}_L^{(k-1)} \right)^{-1}\Lambda^\top\mathscr{U}^{(k)}_L =\mathscr{L}_L^{(k-1)}\left(\mathscr{L}^{(k)}_L\right)^{-1}, \\
	U_k &= \mathscr{L}_R^{(k)}\Lambda \left( \mathscr{L}_R^{(k-1)} \right)^{-1} =\mathscr{U}_R^{(k-1)}\left(\mathscr{U}^{(k)}_R\right)^{-1},
\end{aligned}
 \end{align*}
and the corresponding entries are
\begin{align*}
\begin{aligned}
	(L_k)_{n+1,n}&=\frac{(\mathscr U_L^{(k)})_{n,n}}{(\mathscr U_L^{(k-1)})_{n+1,n+1}}=(\mathscr L_L^{(k-1)})_{n+1,n}-(\mathscr L_L^{(k)})_{n+1,n},\\
 (U_k)_{n,n+1}&=\frac{(\mathscr L_R^{(k)})_{n,n}}{(\mathscr L_R^{(k-1)})_{n+1,n+1}}=(\mathscr U_R^{(k-1)})_{n,n+1}-(\mathscr U_R^{(k)})_{n,n+1}.
\end{aligned} \end{align*}

\end{coro}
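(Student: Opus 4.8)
The plan is to read off the corollary directly from the two identities already established in the proof of the theorem, together with the definitions of the bidiagonal matrices appearing in the Proposition. First I would recall that in the proof of the Proposition we obtained, for each $k$, the relations $\left(\mathscr{U}_L^{(k-1)}\right)^{-1}\Lambda^\top = L_k \left(\mathscr{U}^{(k)}_L\right)^{-1}$ and $\mathscr{L}^{(k)}_L = L_k^{-1} \mathscr{L}_L^{(k-1)}$ from the telescoping step $\mathscr{M}_L^{(k)} = \mathscr{M}_L^{(k-1)}\Lambda^\top$, and symmetrically $\Lambda \left(\mathscr{L}_R^{(k-1)}\right)^{-1} = \left(\mathscr{L}_R^{(k)}\right)^{-1} U_k$ and $\mathscr{U}_R^{(k)} = \mathscr{U}_R^{(k-1)} U_k^{-1}$. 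Solving each of these four relations for the bidiagonal factor yields exactly the four closed expressions for $L_k$ and $U_k$ stated in the corollary: $L_k = \left( \mathscr{U}_L^{(k-1)} \right)^{-1}\Lambda^\top\mathscr{U}^{(k)}_L = \mathscr{L}_L^{(k-1)}\left(\mathscr{L}^{(k)}_L\right)^{-1}$, and likewise for $U_k$.

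Next I would extract the scalar entries. Since $L_k$ is lower bidiagonal with unit diagonal, its only nontrivial entries are $(L_k)_{n+1,n}$. Using $L_k = \mathscr{L}_L^{(k-1)}\left(\mathscr{L}^{(k)}_L\right)^{-1}$ and the fact that $\mathscr{L}_L^{(k-1)}$ and $\mathscr{L}^{(k)}_L$ are lower unitriangular, a direct multiplication of the $(n+1,n)$ entry gives $(L_k)_{n+1,n} = (\mathscr{L}_L^{(k-1)})_{n+1,n} - (\mathscr{L}_L^{(k)})_{n+1,n}$; one checks that the contributions from columns other than $n$ and $n+1$ cancel because both factors are bidiagonal-compatible on the relevant band, or more simply by writing $\mathscr{L}^{(k)}_L = L_k^{-1}\mathscr{L}_L^{(k-1)}$ and reading off the $(n+1,n)$ entry of $L_k^{-1}$, which for a lower bidiagonal unitriangular matrix equals $-(L_k)_{n+1,n}$. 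For the quotient formula, I would use $L_k = \left( \mathscr{U}_L^{(k-1)} \right)^{-1}\Lambda^\top\mathscr{U}^{(k)}_L$: since $\mathscr{U}_L^{(k-1)}$ and $\mathscr{U}^{(k)}_L$ are upper triangular and $\Lambda^\top$ shifts down by one, the $(n+1,n)$ entry picks out precisely $(\mathscr{U}_L^{(k-1)})^{-1}_{n+1,n+1}(\mathscr{U}^{(k)}_L)_{n,n} = (\mathscr U_L^{(k)})_{n,n}/(\mathscr U_L^{(k-1)})_{n+1,n+1}$, using that the inverse of an upper triangular matrix has reciprocal diagonal entries. The computation for $(U_k)_{n,n+1}$ is entirely analogous, with the roles of $L$ and $U$, rows and columns, and $\Lambda^\top$ and $\Lambda$ interchanged.

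I do not expect any serious obstacle here: the corollary is a bookkeeping consequence of formulas already in hand, and the statement even advertises itself as following ``from the proof of the previous result.'' The only point requiring a little care is verifying that the off-band contributions in the products $\mathscr{L}_L^{(k-1)}\left(\mathscr{L}^{(k)}_L\right)^{-1}$ and $\left( \mathscr{U}_L^{(k-1)} \right)^{-1}\Lambda^\top\mathscr{U}^{(k)}_L$ genuinely vanish when one isolates a single sub- or super-diagonal entry — this is immediate once one invokes that the product is known a priori to be bidiagonal (from the Proposition), so that only the diagonal and the one relevant off-diagonal entry need to be matched, and the diagonal is $1$ on both sides by the unit-diagonal normalization. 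Hence the proof reduces to: (1) restate the four defining relations for $L_k$, $U_k$ from the telescoping identities; (2) invert to get the four product expressions; (3) compute the $(n+1,n)$ entry (resp. $(n,n+1)$ entry) in each of the two available forms, using the reciprocal-diagonal rule for inverses of triangular matrices and the shift action of $\Lambda^\top$ (resp. $\Lambda$).
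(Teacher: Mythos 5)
Your proposal is correct and follows essentially the same route as the paper, which simply notes that the corollary is read off ``from the proof of the previous result'': the four product expressions for \(L_k\) and \(U_k\) are the relations already established in the telescoping step of the Proposition, and the entry formulas follow by the triangular-structure bookkeeping you describe (only the \(j=n\) term survives in the \(\left(\mathscr{U}_L^{(k-1)}\right)^{-1}\Lambda^\top\mathscr{U}_L^{(k)}\) product, and rewriting \(L_k\mathscr{L}_L^{(k)}=\mathscr{L}_L^{(k-1)}\) gives the difference formula). No gaps.
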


Let's introduce \(r(n,p)\coloneq \frac{n}{p}-\big\lfloor\frac{n}{p}\big\rfloor\) as the remainder in the integer division of \(n\) by \(p\). After \(k\) left Christoffel perturbations, let us denote by \(A_L^{(k)}\) the corresponding left-normalized mixed multiple orthogonal polynomials. Similarly, after \(k\) right Christoffel perturbations, let us denote by \(B_R^{(k)}\) the corresponding right-normalized mixed multiple orthogonal polynomials.

Given a polynomial \( P\) we denote its leading coefficient by \( \operatorname{LC}
(P) \).
Then, we can express the bidiagonal matrix entries as follows:
\begin{coro}\label{coro}
	In terms of the leading coefficients of the Christoffel-perturbed mixed multiple orthogonal polynomials, we have:
\begin{align*}
\begin{aligned}
 (L_k)_{n+1,n}&=\frac{\operatorname{LC}\left((A_L^{(k)})^{(r(n,p)+1)}_n\right)}{\operatorname{LC}\left((A_L^{(k-1)})^{(r(n+1,p)+1)}_{n+1}\right)},\\
 (U_k)_{n,n+1}&=\frac{\operatorname{LC}\left((B_R^{(k)})^{(r(n,q)+1)}_n\right)}{\operatorname{LC}\left((B_R^{(k-1)})^{(r(n+1,q)+1)}_{n+1}\right)}.
	\end{aligned} \end{align*}
\end{coro}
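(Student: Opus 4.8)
The plan is to connect the ratios of diagonal entries of the triangular Gauss--Borel factors, already obtained in the preceding Corollary, to leading coefficients of the Christoffel-perturbed polynomials. The starting point is the previous Corollary, which gives
\[
(L_k)_{n+1,n}=\frac{(\mathscr U_L^{(k)})_{n,n}}{(\mathscr U_L^{(k-1)})_{n+1,n+1}},\qquad
(U_k)_{n,n+1}=\frac{(\mathscr L_R^{(k)})_{n,n}}{(\mathscr L_R^{(k-1)})_{n+1,n+1}},
\]
so it suffices to identify $(\mathscr U_L^{(k)})_{n,n}$ with the reciprocal of a leading coefficient of $(A_L^{(k)})^{(\bullet)}_n$, and $(\mathscr L_R^{(k)})_{n,n}$ with the reciprocal of a leading coefficient of $(B_R^{(k)})^{(\bullet)}_n$. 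First I would recall the definition $A(x)=X_{[p]}^\top(x)\,\mathscr U$, which means that the $n$-th column of $A$, read as the row vector $\big(A^{(1)}_n,\dots,A^{(p)}_n\big)$ of polynomials, is obtained by pairing $X_{[p]}^\top(x)$ with the $n$-th column of $\mathscr U$. Writing $n=mp+s$ with $s=r(n,p)\cdot p\in\{0,\dots,p-1\}$ — more precisely $s$ is the residue so that the relevant component is the $(s+1)$-th — the highest monomial $x^m$ that can appear in any entry of that column comes with coefficient $(\mathscr U)_{n,n}$ sitting in component $r(n,p)+1$; all later diagonal-respecting structure forces the top-degree term of $A^{(r(n,p)+1)}_n$ to be exactly $(\mathscr U)_{n,n}\,x^{\lfloor n/p\rfloor}$, while the other components $A^{(a)}_n$ have strictly lower degree. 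Hence $\operatorname{LC}\big(A^{(r(n,p)+1)}_n\big)=(\mathscr U)_{n,n}$, and the same computation applied to the perturbed data gives $\operatorname{LC}\big((A_L^{(k)})^{(r(n,p)+1)}_n\big)=(\mathscr U_L^{(k)})_{n,n}$.

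Next I would carry out the dual computation for $B$. From $B(x)=\mathscr L X_{[q]}(x)$, the $n$-th row of $B$, namely $\big(B^{(1)}_n,\dots,B^{(q)}_n\big)$, is the pairing of the $n$-th row of $\mathscr L$ with $X_{[q]}(x)$; the lower-triangular structure of $\mathscr L$ means the last (hence highest-degree) contribution is $(\mathscr L)_{n,n}$ multiplying the block entry $x^{\lfloor n/q\rfloor}$ in component $r(n,q)+1$, so $\operatorname{LC}\big(B^{(r(n,q)+1)}_n\big)=(\mathscr L)_{n,n}$, and with the right-normalized perturbed measures $\operatorname{LC}\big((B_R^{(k)})^{(r(n,q)+1)}_n\big)=(\mathscr L_R^{(k)})_{n,n}$. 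Substituting these two identities into the Corollary's quotients immediately yields
\[
(L_k)_{n+1,n}=\frac{\operatorname{LC}\big((A_L^{(k)})^{(r(n,p)+1)}_n\big)}{\operatorname{LC}\big((A_L^{(k-1)})^{(r(n+1,p)+1)}_{n+1}\big)},\qquad
(U_k)_{n,n+1}=\frac{\operatorname{LC}\big((B_R^{(k)})^{(r(n,q)+1)}_n\big)}{\operatorname{LC}\big((B_R^{(k-1)})^{(r(n+1,q)+1)}_{n+1}\big)},
\]
which is the claim.

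The step I expect to be the main obstacle is the careful bookkeeping in the degree/leading-coefficient identification: one must verify that among the $p$ (respectively $q$) polynomial entries stacked in a given column of $A$ (row of $B$) it is precisely the $(r(n,p)+1)$-th (respectively $(r(n,q)+1)$-th) entry that attains the maximal degree, and that its top coefficient is the corresponding diagonal entry of $\mathscr U$ (respectively $\mathscr L$) rather than a combination of several. This is a consequence of the block structure of $X_{[p]}(x)=\big(I_p,xI_p,x^2I_p,\dots\big)^\top$ together with triangularity: the $n$-th column of $\mathscr U$ has its last nonzero entry in row $n$, and row $n$ of $X_{[p]}^\top(x)$ is $x^{\lfloor n/p\rfloor}$ in slot $r(n,p)+1$ and $0$ elsewhere, so the $x^{\lfloor n/p\rfloor}$ term can only land in that one component; every earlier row of $\mathscr U$ contributes lower powers of $x$. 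Once this positional argument is made precise, and one notes that the Christoffel perturbations $\mu_L^{(k)},\mu_R^{(k)}$ are genuine matrices of measures to which the very same construction of $A$, $B$ applies verbatim (so that $A_L^{(k)}$ is built from $\mathscr U_L^{(k)}$ and $B_R^{(k)}$ from $\mathscr L_R^{(k)}$), the proof is complete.
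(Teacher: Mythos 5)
Your proof is correct and follows exactly the route the paper intends: the corollary is stated without proof precisely because it amounts to combining the preceding corollary's diagonal-entry formulas with the observation that $(\mathscr U_L^{(k)})_{n,n}$ and $(\mathscr L_R^{(k)})_{n,n}$ are the leading coefficients of the $(r(n,p)+1)$-th and $(r(n,q)+1)$-th components of $(A_L^{(k)})_n$ and $(B_R^{(k)})_n$, which you verify carefully from the triangularity of $\mathscr U$, $\mathscr L$ and the block structure of $X_{[p]}$, $X_{[q]}$. Your positional bookkeeping (including the reading of $r(n,p)$ as the integer residue, despite the paper's literal definition giving the fractional part) supplies precisely the details the paper leaves implicit.
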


For \(k\in\{1,\ldots,p\}\), let's denote \(T_L^{(k)}\) the recurrence matrix for the \(k\)-th iteration of left Christoffel transformations with left normalization. Similarly, for \(k\in\{1,\ldots,q\}\), let's denote \(T_R^{(k)}\) as the recurrence matrix for the \(k\)-th iteration of right Christoffel transformations with right normalization. Next, we'll show that these banded matrices correspond to the Darboux transformations of the original recurrence matrix.

\begin{pro}
	The perturbed recurrence matrices are the Darboux transformations of the recurrence matrix:
	\begin{align*}
 T^{(k)}_L&= L_{k+1}\cdots L_p \mathscr D^{(p)}\mathscr D^{-1} \widetilde U_q\cdots \widetilde U_1L_1\cdots L_k, & k&\in\{1,\ldots,p\},\\
 	T^{(k)}_R&= U_{k}\cdots U_1\widetilde L_{1}\cdots \widetilde L_p \mathscr D^{-1} \mathscr D^{(q)} U_q\cdots U_{k+1}, & k&\in\{1,\ldots,q\}.
	\end{align*}
\end{pro}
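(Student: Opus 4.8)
The plan is to mimic the derivation of the bidiagonal factorizations \eqref{eq:T_L_bidiagonal} and \eqref{eq:T_R_bidiagonal} in the previous theorem, but now starting from the recurrence matrix of the \(k\)-th Christoffel transformation instead of the original one. Concretely, for the left case I would first write, using \eqref{eq:banded_recurrence_matrix} applied to the \(k\)-th left-normalized Christoffel perturbation,
\begin{align*}
T_L^{(k)} = \left(\mathscr{U}_L^{(k)}\right)^{-1}\left(\Lambda^p\right)^\top \mathscr{U}_L^{(k)}.
\end{align*}
The key observation is that the first relation in \eqref{eq:left_Christoffel}, read between levels \(k\) and \(k+p\), telescopes to give
\begin{align*}
\left(\mathscr{U}_L^{(k)}\right)^{-1}\left(\Lambda^p\right)^\top = L_{k+1}\cdots L_{k+p}\left(\mathscr{U}_L^{(k+p)}\right)^{-1};
\end{align*}
but since we are only iterating up to \(p\) left perturbations, the indices must be read cyclically, and the identity \(\mathfrak{X}_{[p]}^p = xI_p\) (equivalently \(\left(\Lambda^p\right)^\top\) acting as multiplication by \(x\) on \(X_{[p]}^\top\)) means the \((k+p)\)-th perturbation is, up to the relevant diagonal rescaling, the original measure again, so \(L_{k+p}\equiv L_p\), \dots, and \(\mathscr{U}_L^{(k+p)}\) relates to \(\mathscr{U}_L^{(k)}\) by the loop. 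This is the step I expect to be the main obstacle: making precise the cyclic identification \(T_L^{(k+p)} = T_L^{(k)}\) and keeping careful track of which diagonal factors \(\mathscr{D}^{(j)}\) appear, so that the product \(L_{k+1}\cdots L_p\,\mathscr D^{(p)}\mathscr D^{-1}\,\widetilde U_q\cdots\widetilde U_1\,L_1\cdots L_k\) comes out with exactly the stated normalization rather than an extra diagonal twist.

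Granting that bookkeeping, the remainder is a conjugation argument. Starting from \(T_L^{(k)} = \left(\mathscr{U}_L^{(k)}\right)^{-1}\left(\Lambda^p\right)^\top\mathscr{U}_L^{(k)}\), I would insert the telescoped factorization of \(\left(\mathscr{U}_L^{(k)}\right)^{-1}\left(\Lambda^p\right)^\top\) to obtain
\begin{align*}
T_L^{(k)} = L_{k+1}\cdots L_p\left(\mathscr{U}_L^{(p)}\right)^{-1}\mathscr{U}_L^{(k)},
\end{align*}
and then, exactly as in the proof of \eqref{eq:T_L_bidiagonal}, use \(\mathscr{U}_L^{(j)} = \mathscr{U}_R^{(j)}\left(\mathscr{D}^{(j)}\right)^{-1}\) together with \eqref{eq:right_Christoffel} to rewrite \(\left(\mathscr{U}_L^{(p)}\right)^{-1}\mathscr{U}_L^{(k)}\) as a diagonal factor times \(\widetilde U_q\cdots\widetilde U_1\) times a block of \(L\)'s; the \(L_1\cdots L_k\) tail is precisely what restores the \(k\)-th level after the full cycle. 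Equivalently, and perhaps more transparently, one can read the statement as a pure Darboux-type identity: \eqref{eq:T_L_bidiagonal} factors \(T_L = X Y\) with \(X = L_1\cdots L_p\,\mathscr{D}^{(p)}\mathscr D^{-1}\) and \(Y=\widetilde U_q\cdots\widetilde U_1\) split after the block \(L_1\cdots L_k\); then \(T_L^{(k)}\) should be the ``reversed'' product, and one checks that the commuted product \(L_{k+1}\cdots L_p\,\mathscr D^{(p)}\mathscr D^{-1}\,\widetilde U_q\cdots\widetilde U_1\,L_1\cdots L_k\) is conjugate to \(T_L\) via \(\left(L_1\cdots L_k\right)^{-1}\), and that this conjugation is implemented at the level of moment matrices exactly by the passage \(\mu\mapsto\mu_L^{(k)}\).

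The right case \eqref{eq:T_R_bidiagonal}\(\to T_R^{(k)}\) is handled symmetrically: start from \(T_R^{(k)} = \mathscr{L}_R^{(k)}\Lambda^q\left(\mathscr{L}_R^{(k)}\right)^{-1}\), telescope the first relation in \eqref{eq:right_Christoffel} between levels \(k\) and \(k+q\) to get \(\Lambda^q\left(\mathscr{L}_R^{(k)}\right)^{-1} = \left(\mathscr{L}_R^{(q)}\right)^{-1}U_q\cdots U_{k+1}\cdot(\text{loop tail})\), use the cyclic identification coming from \(\mathfrak{X}_{[q]}^q = xI_q\), and then substitute \(\mathscr{L}_R^{(j)}\) in terms of \(\mathscr{L}_L^{(j)}\) and \(\mathscr{D}^{(j)}\) together with \eqref{eq:left_Christoffel} to produce the \(\widetilde L_1\cdots\widetilde L_p\) block and the diagonal factor \(\mathscr D^{-1}\mathscr D^{(q)}\); the \(U_k\cdots U_1\) on the far left is the tail restoring level \(k\). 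Throughout, I would verify the degenerate endpoints \(k=p\) (resp. \(k=q\)), where one of the \(L\)- or \(U\)-blocks is empty and the formula must collapse correctly to \eqref{eq:T_L_bidiagonal} (resp. \eqref{eq:T_R_bidiagonal}) with the factors cyclically shifted, as a consistency check on the index conventions.
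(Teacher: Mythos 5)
Your overall strategy (relate \(T_L^{(k)}\) to \(T_L\) through the bidiagonal factors \(L_j\) and then substitute \eqref{eq:T_L_bidiagonal}) is the right one, but the concrete steps you propose contain two genuine errors. First, the ``cyclic identification'' you single out as the main obstacle is not an obstacle to be overcome --- it is simply false, and fortunately it is not needed. Since \(\mathfrak{X}_{[p]}^p = xI_p\), the \((k+p)\)-th left perturbation of \(\mu\) is \(x\,\mathrm{d}\mu_L^{(k)}\), a genuinely new Christoffel-transformed matrix of measures, so \(L_{k+p}\neq L_p\) and \(T_L^{(k+p)}\neq T_L^{(k)}\) in general: the \((k+p)\)-th recurrence matrix is a Darboux transform of the \(k\)-th, not equal to it. Second, your intermediate identity \(T_L^{(k)} = L_{k+1}\cdots L_p\left(\mathscr{U}_L^{(p)}\right)^{-1}\mathscr{U}_L^{(k)}\) is wrong: telescoping \eqref{eq:left_Christoffel} from level \(k\) to level \(p\) only absorbs \(\left(\Lambda^{p-k}\right)^\top\), so the correct statement is \(T_L^{(k)} = L_{k+1}\cdots L_p\left(\mathscr{U}_L^{(p)}\right)^{-1}\left(\Lambda^{k}\right)^\top\mathscr{U}_L^{(k)}\). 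The factor \(\left(\Lambda^{k}\right)^\top\) you drop is exactly what produces the tail \(L_1\cdots L_k = \mathscr{U}_L^{-1}\left(\Lambda^{k}\right)^\top\mathscr{U}_L^{(k)}\); without it your right-hand side cannot equal the claimed product.

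The paper's proof is the conjugation identity that you mention only in passing as an ``equivalent'' reading, and it is a short computation requiring no cyclicity at all: from \eqref{eq:banded_recurrence_matrix} and \eqref{eq:left_Christoffel},
\begin{align*}
T_L^{(k)}&=\left(\mathscr{U}_L^{(k)}\right)^{-1}\left(\Lambda^{p}\right)^{\top}\mathscr{U}_L^{(k)}
= L_k^{-1}\cdots L_1^{-1}\,\mathscr{U}_L^{-1}\left(\Lambda^{p}\right)^{\top}\mathscr{U}_L\,\mathscr{U}_L^{-1}\left(\Lambda^{k}\right)^{\top}\mathscr{U}_L^{(k)}
= L_k^{-1}\cdots L_1^{-1}\, T_L\, L_1\cdots L_k,
\end{align*}
where one substitutes \(\left(\mathscr{U}_L^{(k)}\right)^{-1}=L_k^{-1}\cdots L_1^{-1}\mathscr{U}_L^{-1}\left(\Lambda^{k}\right)^{\top}\), commutes the powers of \(\Lambda^{\top}\), and inserts \(\mathscr{U}_L\mathscr{U}_L^{-1}\). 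Substituting \eqref{eq:T_L_bidiagonal} and cancelling \(L_k^{-1}\cdots L_1^{-1}\) against the leading block \(L_1\cdots L_p\) gives the stated formula; the right case is handled identically with the \(U_j\) and \eqref{eq:right_Christoffel}. You assert that the conjugation by \(L_1\cdots L_k\) ``is implemented at the level of moment matrices by \(\mu\mapsto\mu_L^{(k)}\)'' but never verify it, and that verification is the entire content of the proof.
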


\begin{proof}
	The iteration of the left Christoffel perturbations leads to the Darboux transformations of the banded recurrence matrix with left normalization:
\begin{align*}
\begin{aligned}
	T^{(k)}_L &= \mathscr{U}_L^{(k)} \left(\Lambda^p\right)^\top \left(\mathscr{U}_L^{(k)} \right)^{-1} \\
	&= L_k^{-1} \cdots L_1^{-1} \mathscr{U}_L^{-1}\left(\Lambda^p\right)^{\top} (\Lambda^k)^{\top} \left(\mathscr{U}_L^{(k)} \right)^{-1} \\
	&= L_k^{-1} \cdots L_1^{-1} \mathscr{U}_L^{-1}\left(\Lambda^p\right)^{\top} \mathscr{U}_L\mathscr{U}_L^{-1}\left(\Lambda^k\right)^{\top} \left(\mathscr{U}_L^{(k)} \right)^{-1} \\
	&= L_k^{-1} \cdots L_1^{-1} T_L L_1\cdots L_k,
\end{aligned}
 \end{align*}
which for \(k \in \{1,\ldots, p-1\}\) leads to Darboux's permutation of factors in the bidiagonal factorization of \(T_L\).

A similar result holds for the right Christoffel perturbation in the right normalization:
\begin{align*}
\begin{aligned}
	T^{(k)}_R &= U_k \cdots U_1 T_R U_1^{-1}\cdots U_k^{-1}.
\end{aligned}
 \end{align*}
This, for \(k \in \{1,\ldots, q\}\), leads to Darboux's permutation of factors in the bidiagonal factorization of \(T_R\).
\end{proof}

\section{Examples: mixed Jacobi--Piñeiro and Laguerre of the first kind}

Let's consider the vectors 
\begin{align*}
\vec{\alpha} = \begin{bNiceMatrix}
	\alpha_1 & \Cdots & \alpha_p
\end{bNiceMatrix}, \quad 
\vec{\beta} = \begin{bNiceMatrix}
	\beta_1 & \Cdots & \beta_q
\end{bNiceMatrix},
 \end{align*}
and introduce a mixed Jacobi--Piñeiro matrix of measures of the form
\begin{align*}
\d \mu_{\vec{\alpha}, \vec{\beta}} = 
\begin{bNiceMatrix}
	x^{\beta_1} \\ \Vdots \\ x^{\beta_q}
\end{bNiceMatrix}
\begin{bNiceMatrix}
	x^{\alpha_1} & \Cdots & x^{\alpha_p}
\end{bNiceMatrix} 
(1 - x)^\gamma \, \d x,
 \end{align*}
supported on \(\Delta = [0, 1]\), or mixed Laguerre of the first kind matrix of measures
\begin{align*}
\d \mu_{\vec{\alpha}, \vec{\beta}} = 
\begin{bNiceMatrix}
	x^{\beta_1} \\ \Vdots \\ x^{\beta_q}
\end{bNiceMatrix}
\begin{bNiceMatrix}
	x^{\alpha_1} & \Cdots & x^{\alpha_p}
\end{bNiceMatrix} 
\Exp{-x}\d x,
 \end{align*}
supported on \(\Delta = [0, +\infty)\). Here \(\alpha_i, \beta_j, \gamma > -1\) for all \(i \in \{1, \ldots, p\}\) and \(j \in \{1, \ldots, q\}\). We assume that \(\alpha_{i_1} - \alpha_{i_2} \notin \mathbb{Z}\) for \(i_1 \neq i_2\), and \(\beta_{j_1} - \beta_{j_2} \notin \mathbb{Z}\) for \(j_1 \neq j_2\), to ensure distinct exponents. According to \cite{Ulises-Sergio-Judith}, these configurations form a perfect matrices of measures, implying that for any permissible values of \(\vec{\alpha}\) and \(\vec{\beta}\), the corresponding mixed multiple orthogonal polynomials have maximum degrees, i.e. are normal.

Next, consider the action of the basic Christoffel transformation as described in \eqref{eq:Christoffel}. Denote by \(A\) and \(B\) the affine transformations acting on \(\vec{\alpha}\) and \(\vec{\beta}\), respectively:
\begin{align*}
A \vec{\alpha} = \begin{bNiceMatrix}
	\alpha_p + 1 & \alpha_1 & \Cdots & \alpha_{p-1}
\end{bNiceMatrix}, \quad
B \vec{\beta} = \begin{bNiceMatrix}
	\beta_q + 1 & \beta_1 & \Cdots & \beta_{q-1}
\end{bNiceMatrix}.
 \end{align*}
For any allowed vectors \(\vec{\alpha}\) and \(\vec{\beta}\), define new vectors of parameters:
\begin{align*}
\vec{\alpha}^{(k)} \coloneq A^k \vec{\alpha}, \quad
\vec{\beta}^{(k)} \coloneq B^k \vec{\beta},
 \end{align*}
which, by construction, remain within the allowed range of AT parameters. Thus, the 
Christoffel-transformed matrix of measures retains the mixed Jacobi--Piñeiro or Laguerre of the first kind form, respectively:
\begin{align*}
\left( \d \mu_{\vec{\alpha}, \vec{\beta}} \right)_L^{(k)} = \d \mu_{\vec{\alpha}^{(k)}, \vec{\beta}}, \quad
\left( \d \mu_{\vec{\alpha}, \vec{\beta}} \right)_R^{(k)} = \d \mu_{\vec{\alpha}, \vec{\beta}^{(k)}}.
 \end{align*}
Recalling Corollary \ref{coro}, the coefficients in the bidiagonal factorization are given by:
\begin{align*}
\begin{aligned}
	(L_k)_{n+1,n} &= \frac{\operatorname{LC} \left( \left( A_{L, \vec{\alpha}^{(k)}, \vec{\beta}} \right)^{(r(n,p)+1)}_n \right)}{\operatorname{LC} \left( \left( A_{L, \vec{\alpha}^{(k-1)}, \vec{\beta}} \right)^{(r(n+1,p)+1)}_{n+1} \right)}, \\
	(U_k)_{n,n+1} &= \frac{\operatorname{LC} \left( \left( B_{R, \vec{\alpha}, \vec{\beta}^{(k)}} \right)^{(r(n,q)+1)}_n \right)}{\operatorname{LC} \left( \left( B_{R, \vec{\alpha}, \vec{\beta}^{(k-1)}} \right)^{(r(n+1,q)+1)}_{n+1} \right)}.
\end{aligned}
 \end{align*}
Unfortunately, we cannot yet provide more explicit expressions, as the hypergeometric representations for this family are still unavailable. In the non-mixed standard Jacobi–Piñeiro case, these coefficients were explicitly derived in \cite{BFM_24_2}, leading to the closed-form results in \cite{BDFHM}.

\section*{Conclusions and Outlook}

In this paper, we have demonstrated how the \( LU \) factorization framework for mixed multiple orthogonal polynomials along the step-line can be used to determine the coefficients of the bidiagonal factorization of the associated banded recurrence matrix. This approach relies on the application of iterated Christoffel transformations.

For future research, we aim to explore the combinatorial interpretations of these results and to derive explicit expressions for mixed multiple orthogonal polynomials, particularly for the Jacobi--Piñeiro and Laguerre polynomials of the first kind. Additionally, we plan to investigate the corresponding recurrence matrices, their bidiagonal factorizations, and to identify the regions of positivity and total positivity associated with these structures.

\section*{Acknowledgments}

AB acknowledges Centre for Mathematics of the University of Coimbra 
(funded by the Portuguese Government through FCT/MCTES, DOI: 10.54499/UIDB/00324/2020).

JEFD and AF
acknowledges the CIDMA Center for Research and Development in Mathematics and Applications
(University of Aveiro) and the Portuguese Foundation 
\linebreak
for Science and Technology (FCT) for their support within
projects, \linebreak
DOI: 10.54499/UIDB/04106/2020 \& DOI: 10.54499/UI/BD/152576/2022;
additionally, 
\linebreak
JEFD acknowledges the PhD contract UI/BD/152576/2022 from FCT Portugal.

MM acknowledges Spanish ``Agencia Estatal de Investigación'' research project
\linebreak
{[PID2021- 122154NB-I00]},
\emph{Ortogonalidad y Aproximación con Aplicaciones en Machine Learning y Teoría de la Probabilidad}.

\section*{Declarations}

\begin{enumerate}[\rm i)]
	\item \textbf{Conflict of interest:} The authors declare no conflict of interest.
	\item \textbf{Ethical approval:} Not applicable.
	\item \textbf{Contributions:} All the authors have contribute equally.
	\item \textbf{Data availability:} This paper has no associated data.
\end{enumerate}

\end{document}